\newcommand {\red} {\textcolor{red}}
\newtheorem{theo}{Theorem}[section] 
\newtheorem{lemm}[theo]{Lemma}
\newtheorem{prop}[theo]{Proposition}
\newtheorem{prob}{Problem}[section]
\newcommand \hyh { \mathcal{H} }
\newcommand \hyc { \mathcal{C} }
\newcommand \hye { \mathcal{E} }
\newcommand \hyd { \mathcal{D} }
\newcommand \hyl { \mathcal{L} }
\renewcommand\red {}
\newcommand{\up}{\textup}
\begin{document}

\title{Zero-free intervals of chromatic polynomials of hypergraphs
}
\author{Ruixue Zhang and 
Fengming Dong\thanks{Corresponding author.
Email: fengming.dong@nie.edu.sg}
\\
\small
Mathematics and Mathematics Education, National
Institute of Education
\\
\small Nanyang Technological University, Singapore
}
\date{}

\maketitle

\begin{abstract} 
In this paper, we prove that 
\red{$(-\infty, 0)$ 
is a zero-free interval 
for chromatic polynomials of 
a family $\hyl_0$ of hypergraphs
and $(0, 1)$ is a zero-free interval for chromatic polynomials of a subfamily $\hyl_0'$ of $\hyl_0$ of hypergraphs.}
These results extend known results on zero-free intervals of 
chromatic polynomials of graphs and hypergraphs.
\end{abstract}

\noindent {\bf MSC}: 05C15, 05C31, 05C65

\noindent {\bf Keywords}: 
graph; hypergraph; chromatic polynomial

\section{Introduction}

For any simple graph $G=(V, E)$ and any positive integer $k$, 
a \textit{proper $k$-coloring of $G$} is a mapping 
\red{$\phi: V\rightarrow \{1, 2, \ldots,k\}$}
such that $\phi(u)\neq \phi(v)$ holds for each pair of adjacent vertices 
$u$ and $v$ in $G$.
The \textit{chromatic polynomial} $P(G, \lambda)$ of $G$
is the function which counts 
the number of proper $\lambda$-colorings of $G$
whenever $\lambda$ is a positive integer.
It was introduced by Birkhoff \cite{birkhoff1946chromatic} in 1912
for planar graphs, in hope of proving the four-color conjecture,
and extended to all graphs 
by Whitney \cite{whitney1932logical} in 1932.
Although Birkhoff's \red{attempt} failed, the study of 
chromatic polynomials is one of the active research areas
\red{in graph theory}, especially the study of 
the zero distribution of chromatic polynomials~
\red{\cite{
jackson1993zero, jackson2003zeros, royle2009recent,
thomassen1997zero,woodall1977zeros,woodall1997largest}.}

It was proved by Sokal~\cite{sokal2004chromatic} in 2004 that 
the zeros of chromatic polynomials of graphs 
are dense in the whole 
complex plane. 
It is also known that $(-\infty, 0), (0, 1)$ and $(1, 32/27]$ are the only three maximal zero-free \red{real intervals} for 
chromatic polynomials of graphs
(see 
\red{\cite{jackson1993zero,thomassen1997zero,tutte1974chromials}}
), 
where the zero-free interval $(1,32/27]$ was found by Jackson~\cite{jackson1993zero}
and was proved to be the last such interval 
by Thomassen~\cite{thomassen1997zero}.
For certain subsets of graphs, 
Thomassen \cite{thomassen2000chromatic} showed that 
if $G$ is any graph with a Hamiltonian path, 
then $P(G,\lambda)$ is zero-free in the interval 
$(1,1.2955\cdots)$, while 
Dong and Jackson \cite{dong2011zero} proved that 
for any 3-connected plannar graph $G$, 
$P(G,\lambda)$ is zero-free in the interval 
$(1,1.2040\cdots) $.

The study of chromatic polynomials has been extended to
chromatic polynomials of hypergraphs for \red{many years}.
A \textit{hypergraph} $\hyh$
\red{is an \red{ordered} pair $(V(\hyh), \hye(\hyh))$,
where $V(\hyh)$ is a finite set,
called the \textit{vertex set} of $\hyh$,
and $\hye(\hyh)$ is a set of non-empty subsets of 
$V(\hyh)$, called the \textit{edge set} of $\hyh$.
Obviously, $\hye (\hyh)\subseteq \{e\subseteq V(\hyh): |e|\geq 1\}$. }
Thus a graph is a hypergraph $\hyh$ with
$|e|\leq 2$ for each $e\in \hye(\hyh)$.
Regarding \red{colorings} of hypergraphs, 
there are several different definitions,
such as strong proper colorings \cite{agnarsson2004strong} and 
$\hyc$-colorings \cite{voloshin1993mixed}.
In this paper, we work with weak proper colorings of hypergraphs which were
introduced by \red{Erd\H{o}s} and Hajnal \cite{erdHos1966chromatic} in 1966.
For any integer $k\geq 1$, 
a \textit{weak proper $k$-coloring} of a hypergraph $\hyh=(V, \hye)$ is 
\red{an assignment of colors from 
$\{1,\ldots, k\}$
to the vertices so that 
each edge contains at least two vertices with different colors,
i.e., there
is a mapping $\phi: V\rightarrow \{1,\ldots, k\}$} such that 
$|\{\phi(v): v\in e\}|\geq 2$
holds for each $e\in \hye$.
The {\it chromatic polynomial} of $\hyh$,
denoted by $P(\hyh, \lambda)$,
is the function which counts the number of 
weak proper $k$-colorings of $\hyh$ whenever $\lambda=k$ 
is a positive integer,
\red{and it is indeed a polynomial in $\lambda$ 
\cite{dohmen1995broken,tomescu1998chromatic}.}

\red{A weak proper $k$-coloring of
a hypergraph $\hyh$ is actually a proper $k$-coloring of a graph whenever $\hyh$ is a graph.
In such case, $P(\hyh, \lambda)$ is a chromatic polynomial of a graph.
Thus chromatic polynomials 
of hypergraphs are generalizations of  chromatic polynomials of graphs.}
Many properties \red{of} chromatic polynomials of graphs have been 
extended to chromatic polynomials of hypergraphs 
\red{(for example, see \cite{allagan2007chromatic, allagan2014chromaticwheel, allagan2014chromatic,
borowiecki2000chromatic, dohmen1995broken,tomescu1998chromatic, tomescu2004sunflower, tomescu2007chromaticity,
 tomescu2009some, tomescu2014hypergraphs, 
voloshin1995upper, voloshin2002coloring, walter2009some}).}
Meanwhile in \red{\cite{zhang2017properties}}, the authors of this article
also showed that some properties \red{of} chromatic 
polynomials of hypergraphs do not hold 
for chromatic polynomials of 
graphs.
One of these properties is that
chromatic polynomials of hypergraphs have 
\red{real zeros that are dense}
in the 
whole set of real numbers,
while chromatic polynomials of graphs 
have three zero-free intervals.
\red{It is natural for people to look for a suitable way of extending 
the result that $(-\infty,0)$ and $(0,1)$ 
are zero-free intervals of chromatic polynomials of graphs
to chromatic polynomials of hypergraphs.}

A \textit{cycle} $C$ in $\hyh$ is defined to be a sequence
of alternating vertices and edges:
$(v_{1}, e_{1}, v_{2}, e_{2}, \ldots, v_{t}, e_{t}, v_1),$
where $t\ge 2$, 
$v_1,\ldots,v_t$ are 
pairwise distinct vertices
and $e_1, \ldots, e_t$ are pairwise distinct edges, such that 
$\{v_{i}, v_{i+1}\}\subseteq e_{i}$ for $i\in \{1, \ldots, t\}$,
where $v_{t+1}=v_{1}$.
Sometimes, a cycle $C$ is also written as $(e_1,\ldots, e_t)$ for short.
In this case, we assume that there exist pairwise distinct 
vertices $v_1, \ldots,v_t$
such that $v_{i+1}\in e_i\cap e_{i+1}$ for all $i=0,1,\ldots,t-1$,
where $e_{0}=e_t$. 

Let \red{$\hyl_1$ be the set of 
hypergraphs 
in which each edge has an even size and 
each cycle $C=(e_1,\ldots,e_r)$
contains $e_i$ with $|e_i|=2$ for some $i \in \{1, \ldots, r\}$.}
Dohmen \cite{dohmen1995broken} proved that 
for any hypergraph $\hyh$ in $\hyl_1$,
$P(\hyh,\lambda)\ne 0$ holds
for any real
$\lambda\in (-\infty, 0)$.

\begin{theo}[\cite{dohmen1995broken}]
\label{doh-th}
\red{If $\hyh=(V, \hye)$ is a hypergraph in $\hyl_1$, 
then }
$(-1)^{|V|}P(\mathcal{H}, \lambda)>0$
holds for all real $\lambda\in (-\infty, 0)$.
\end{theo}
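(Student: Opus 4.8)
The plan is to reduce Theorem~\ref{doh-th} to the classical fact that $(-1)^{|V(G)|}P(G,\lambda)>0$ for every loopless graph $G$ and every real $\lambda\in(-\infty,0)$; this follows from Whitney's broken-cycle expansion, or from the zero-freeness of $(-\infty,0)$ for graph chromatic polynomials recalled in the introduction together with the sign of the leading coefficient of $P(G,\lambda)$. Split $\hye(\hyh)$ into the set $\hye_2$ of edges of size $2$ and the set $\hye^*$ of edges of size $\ge 4$, and put $G_2=(V,\hye_2)$ and $\hyh^*=(V,\hye^*)$. The structural remark that drives everything is that $\hyh^*$ contains no cycle: a cycle of $\hyh^*$ would be a cycle of $\hyh$ none of whose edges has size $2$, contradicting $\hyh\in\hyl_1$.

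First I would expand $P(\hyh,\lambda)$ by inclusion--exclusion over the ``monochromatic'' events of the edges in $\hye^*$, relative to the colourings of $V$ that are already proper on $G_2$: for $S\subseteq\hye^*$ let $\pi_S$ be the partition of $V$ obtained by merging all vertices of $f$ for each $f\in S$. Because $S\subseteq\hye^*$ is cycle-free, a short induction on $|S|$ --- if two vertices of a newly added edge lay in a common block, that block would contain a path closing a cycle --- shows that the blocks of $\pi_S$ are precisely the components of $(V,S)$ and that $|\pi_S|=|V|-\operatorname{rank}(S)$, where $\operatorname{rank}(S):=\sum_{f\in S}(|f|-1)$. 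A colouring proper on $G_2$ and monochromatic on every $f\in S$ is exactly a proper colouring of the quotient graph $G_2/\pi_S$ (with the convention $P(G_2/\pi_S,\lambda):=0$ when a block contains both ends of some edge of $G_2$), so
\[
P(\hyh,\lambda)=\sum_{S\subseteq\hye^*}(-1)^{|S|}P(G_2/\pi_S,\lambda).
\]

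The next step is the sign bookkeeping, which is where the even-size hypothesis enters: since every $f\in\hye^*$ has even size, $\operatorname{rank}(S)\equiv|S|\pmod 2$, and since $G_2/\pi_S$ has $m_S:=|V|-\operatorname{rank}(S)$ vertices we have $(-1)^{|V|}=(-1)^{m_S}(-1)^{\operatorname{rank}(S)}=(-1)^{m_S}(-1)^{|S|}$. Multiplying the identity above by $(-1)^{|V|}$ gives
\[
(-1)^{|V|}P(\hyh,\lambda)=\sum_{S\subseteq\hye^*}(-1)^{m_S}P(G_2/\pi_S,\lambda).
\]
For $\lambda\in(-\infty,0)$ each summand is $\ge 0$ by the classical graph fact applied to $G_2/\pi_S$ (equal to $0$ if $G_2/\pi_S$ has a loop, strictly positive otherwise), and the $S=\emptyset$ term equals $(-1)^{|V|}P(G_2,\lambda)>0$; hence $(-1)^{|V|}P(\hyh,\lambda)>0$.

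The main obstacle I anticipate is the rank identity for cycle-free subsets, which is the conceptual heart of the argument: it is exactly the conjunction of ``$\hyh^*$ cycle-free'' and ``all edges even'' that forces every term of the inclusion--exclusion to carry the common sign $(-1)^{|V|}$, and both hypotheses are genuinely needed --- e.g.\ for the hypergraph on $\{1,\dots,6\}$ with the two size-$4$ edges $\{1,2,3,4\}$ and $\{3,4,5,6\}$, the corresponding cycle has no edge of size $2$, $\hyh^*$ fails to be cycle-free, and $P=\lambda^{6}-2\lambda^{3}+\lambda$ already has a zero in $(-1,0)$. The remaining checks --- the quotient-graph reading of the inclusion--exclusion terms and the loop convention --- are routine.
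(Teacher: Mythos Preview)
Your argument is correct. The inclusion--exclusion identity, the rank formula $|\pi_S|=|V|-\sum_{f\in S}(|f|-1)$ for cycle-free $S\subseteq\hye^*$, and the parity bookkeeping are all sound; the $S=\emptyset$ term guarantees strict positivity. One small point worth making explicit is that when you extract a path in $(V,S')$ between two vertices of a newly added edge, minimality of the walk forces not only the vertices but also the edges to be pairwise distinct (otherwise one could shortcut), so the cycle you exhibit really is a cycle in the paper's sense.

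Your route, however, is quite different from the paper's. The paper does not prove Theorem~\ref{doh-th} directly: it is quoted from Dohmen and then recovered as the special case $\hyl_1\subset\hyl_0$ of Theorem~\ref{Th-main1}. The proof of Theorem~\ref{Th-main1} proceeds by deletion--contraction on an edge of size $>2$ and an induction on the number of edges, the substance being Lemma~\ref{LE2.6}, which shows that $\hyl_0$ is closed (after passing to the Sperner subhypergraph) under these operations. Your approach instead expands over all large edges at once and uses the single structural fact that in $\hyl_1$ the large-edge hypergraph is acyclic; this yields an explicit nonnegative expansion and makes transparent exactly where ``even sizes'' and ``every cycle meets a $2$-edge'' are each used. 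The trade-off is that your acyclicity hypothesis is genuinely an $\hyl_1$ feature and fails in $\hyl_0$ (e.g.\ the hypergraph in Figure~\ref{Fig0}), so your method does not extend to the paper's main theorem, whereas the deletion--contraction closure argument does.
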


Theorem~\ref{doh-th} certainly includes the fact that
\red{chromatic polynomials of
graphs has no real zero in the interval 
$(-\infty, 0)$.} 
As we know, this result is currently 
the only known result on zero-free intervals for chromatic polynomials of hypergraphs.

Observe that the hypergraph $\hyh$ in Figure \ref{Fig0}
does not belong to $\hyl_1$, as 
\red{there is a cycle $(e_1, e_2, e_3)$ in $\hyh$
with $|e_i|>2$ for all $i=1,2,3$.}
It can be calculated that 
\[
P(\hyh, \lambda)=\lambda^6(\lambda-1)^4(\lambda^3-2)
(\lambda^2+\lambda+1)^2,
\]
\red{which shows that although $\hyh$ is not in $\hyl_1$, 
its chromatic polynomial $P(\hyh, \lambda)$ 
is zero-free in the interval $(-\infty, 0)$.}

In this paper, we 
\red{first extend} Dohmen's result to a larger 
set of hypergraphs,
\red{which definitely includes the hypergraph
shown in Figure \ref{Fig0}. 
Futher we prove} the existence 
of an infinite set of hypergraphs \red{with at least one edge 
of size greater than $2$} whose chromatic polynomials
are zero-free in \red{the interval} $(0,1)$.

\vspace*{0.5cm}
\begin{figure}[htp]
\labellist
\pinlabel $e_6$ at 200 420
\pinlabel $e_1$ at 350 380
\pinlabel $e_2$ at 350 250
\pinlabel $e_4$ at 200 200
\pinlabel $e_3$ at 230 315
\pinlabel $e_5$ at 85 310
\pinlabel $e_7$ at 340 315
\pinlabel $e_8$ at 180 305
\endlabellist
\centering
\includegraphics[scale=0.5]{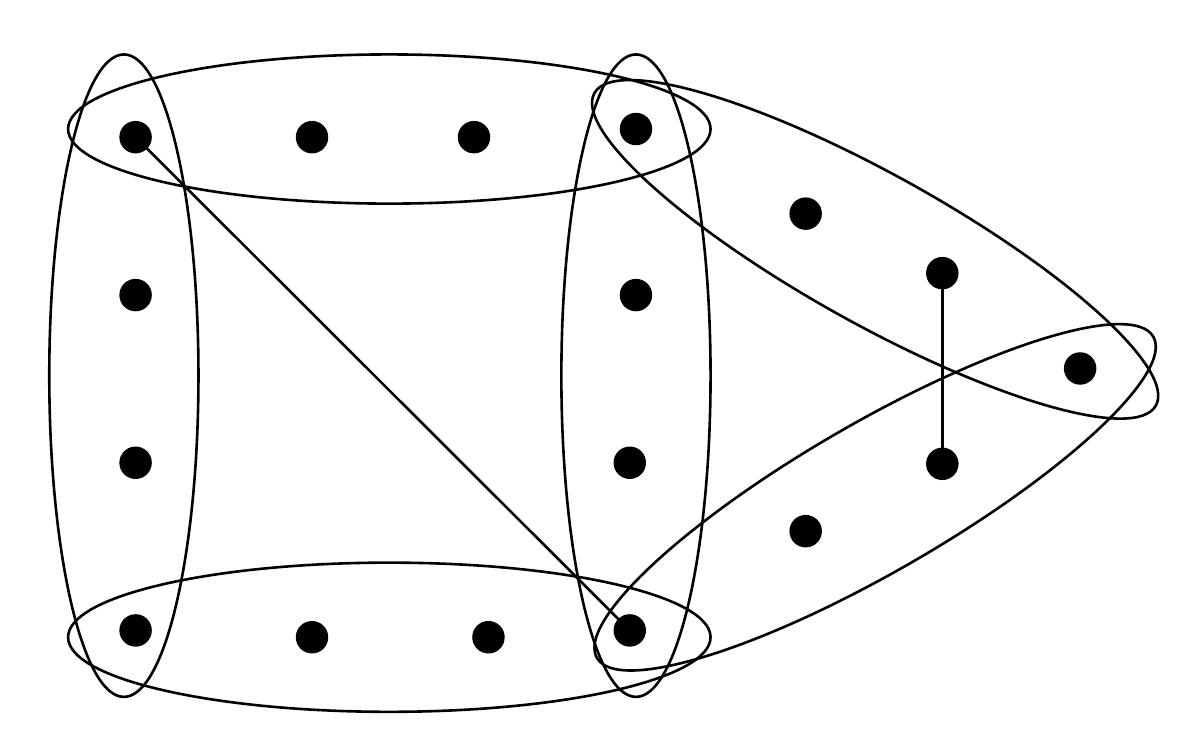}
\caption{A hypergraph in $\hyl_0\setminus \hyl_1$}\label{Fig0}
\end{figure}

Let $\hyl_0$ be the set of hypergraphs $\hyh$
\red{in which each edge has an even size and each cycle 
$C=(e_1,\ldots,e_r)$ contains
distinct vertices $u$ and $v$ in the set 
$\bigcup_{1\le i\le r}e_i$ 
forming an edge in $\hyh$.
For instance, the hypergraph $\hyh$
shown in Figure \ref{Fig0} is in $\hyl_0\setminus \hyl_1$,
which also implies that $\hyl_1$ is a proper subset of $\hyl_0$.
Actually there are infinitely many hypergraphs 
in $\hyl_0\setminus \hyl_1$.
Let $\hyl_0'$ be the set of hypergraphs 
$\hyh=(V, \hye)\in\hyl_0$ containing a connected 
spanning subhypergraph $(V,\hye_2)$, where $\hye_2=\{e\in \hye: |e|=2\}$.
Clearly, $\hyl_0'\subset \hyl_0$.}

We prove the following two results 
on the chromatic polynomials 
of hypergraphs in $\hyl_0$ and $\hyl'_0$ in this article. 

\begin{theo}\label{Th-main1}
\red{If $\mathcal{H}=(V, \hye)$ is a hypergraph in $\hyl_0$, 
then}
$(-1)^{|V|}P(\mathcal{H}, \lambda)>0$
holds for all real $\lambda\in (-\infty, 0)$.
\end{theo}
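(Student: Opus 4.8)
\medskip
\noindent\textit{Proof plan.}\quad I would prove Theorem~\ref{Th-main1} by induction on $b(\mathcal{H})$, the number of edges of $\mathcal{H}$ of size at least $4$, using the deletion--contraction identity
\[
P(\mathcal{H},\lambda)=P(\mathcal{H}\setminus e,\lambda)-P(\mathcal{H}/e,\lambda)\qquad(e\in\hye),
\]
where $\mathcal{H}\setminus e$ removes the edge $e$, and $\mathcal{H}/e$ identifies all vertices of $e$ to one new vertex and deletes $e$, so that colourings of $\mathcal{H}/e$ correspond exactly to the proper colourings of $\mathcal{H}\setminus e$ in which $e$ is monochromatic. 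Since every edge of a hypergraph in $\hyl_0$ has even size, $|V(\mathcal{H}/e)|=|V|-|e|+1\equiv|V|+1\pmod{2}$, so multiplying the identity by $(-1)^{|V|}$ rewrites it as
\[
(-1)^{|V|}P(\mathcal{H},\lambda)=(-1)^{|V(\mathcal{H}\setminus e)|}P(\mathcal{H}\setminus e,\lambda)+(-1)^{|V(\mathcal{H}/e)|}P(\mathcal{H}/e,\lambda);
\]
thus at each step it is enough to reduce to hypergraphs on which the asserted positivity is already known. The base case $b(\mathcal{H})=0$ is precisely when $\mathcal{H}$ is a simple graph, and then $\mathcal{H}\in\hyl_1$, so Theorem~\ref{doh-th} gives the conclusion.

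Assume now $b(\mathcal{H})\ge1$. Two situations reduce immediately. First, if some edge $e$ with $|e|\ge4$ properly contains another edge $g\in\hye$ (in particular, if two vertices of $e$ form a $2$-element edge), then every proper colouring of $\mathcal{H}\setminus e$ already makes $g$, hence $e$, non-monochromatic, so $P(\mathcal{H},\lambda)=P(\mathcal{H}\setminus e,\lambda)$; moreover $\mathcal{H}\setminus e\in\hyl_0$ with $b(\mathcal{H}\setminus e)<b(\mathcal{H})$, and the inductive hypothesis finishes this case. Second, if some edge $e$ with $|e|\ge4$ lies on no cycle of $\mathcal{H}$, then the vertices of $e$ lie in pairwise distinct connected components of $\mathcal{H}\setminus e$ (a shortest connection in $\mathcal{H}\setminus e$ between two of them would close a cycle through $e$), and a routine computation (the colourings of $\mathcal{H}\setminus e$ factor over its components, and $e$ is monochromatic exactly when its single vertex in each component receives a common colour) gives
\[
P(\mathcal{H},\lambda)=\bigl(1-\lambda^{1-|e|}\bigr)\prod_{K}P(K,\lambda),
\]
the product being over all connected components $K$ of $\mathcal{H}\setminus e$. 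Each $K$ again lies in $\hyl_0$ and satisfies $b(K)<b(\mathcal{H})$, so $(-1)^{|V(K)|}P(K,\lambda)>0$ for $\lambda\in(-\infty,0)$ by induction, while $1-\lambda^{1-|e|}>1>0$ there because $|e|$ is even; since $|V|=\sum_K|V(K)|$, the conclusion follows.

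It remains to treat the case in which every edge of size at least $4$ lies on some cycle and no edge properly contains another. Here I would pick an edge $e$ with $|e|\ge4$ lying on a cycle $C$ and apply deletion--contraction to $e$. The summand $P(\mathcal{H}\setminus e,\lambda)$ is harmless: an edge of size at least $4$ cannot be the size-$2$ witness of any cycle, so $\mathcal{H}\setminus e\in\hyl_0$ with $b(\mathcal{H}\setminus e)<b(\mathcal{H})$. The main obstacle, as I see it, is the summand $P(\mathcal{H}/e,\lambda)$: the hypergraph $\mathcal{H}/e$ need not lie in $\hyl_0$, because identifying the vertices of $e$ can create edges of odd size (precisely those meeting $e$ in an even number, at least two, of vertices) and can destroy the size-$2$ witness of a cycle by merging its two endpoints. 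The plan is to show that this damage is always reparable: after a careful choice of $e$ and of the cycle $C$ with its size-$2$ witness $\{u,v\}$, either $\mathcal{H}/e$ reduces to a hypergraph in $\hyl_0$ (with $b$ not increased) by deleting edges that have become redundant in the sense of the first case above, leaving the chromatic polynomial unchanged; or $\mathcal{H}/e$ already lies in a modest enlargement $\widehat{\hyl}_0\supseteq\hyl_0$ that is closed under the deletions and contractions used and still consists only of hypergraphs satisfying the conclusion, so that the same induction may be run inside $\widehat{\hyl}_0$. Making the choice of $e$ and $C$ precise, and pinning down and verifying the closure properties of $\widehat{\hyl}_0$, is where the substantive work lies.
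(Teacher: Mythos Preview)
Your overall framework---induction via deletion--contraction, with the sign working out because $|e|$ is even---is exactly what the paper does (the paper inducts on the total edge count and first replaces $\mathcal H$ by its Sperner subhypergraph $\mathcal H^*$, which has the same chromatic polynomial; your Cases~1 and~2 are correct but unnecessary detours). The genuine gap is your Case~3, which you yourself flag as unfinished: you correctly locate the difficulty in the contraction term, observe that edges meeting $e$ in two or more vertices would become odd-sized in $\mathcal H/e$, and then propose either an unspecified repair or an unspecified enlargement $\widehat{\hyl}_0$. Neither is carried out, and the second route is a dead end---no enlargement is needed.

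What you are missing is that the Sperner reduction already performs the repair. Suppose $\mathcal H\in\hyl_0$ is Sperner and $|e_0|\ge4$. If some $e_1\in\hye$ has $|e_1\cap e_0|\ge2$, pick $u,v\in e_1\cap e_0$; then $(u,e_0,v,e_1,u)$ is a cycle of length~$2$, so the $\hyl_0$ hypothesis supplies an edge $e_2=\{x,y\}\subseteq e_0\cup e_1$, and Spernerness forces one endpoint into $e_1\setminus e_0$ and the other into $e_0\setminus e_1$. After contracting $e_0$ the image of $e_2$ is $\{x,w\}\subseteq(e_1\setminus e_0)\cup\{w\}$, so the image of $e_1$ is deleted on passing to $(\mathcal H/e_0)^*$. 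Hence every surviving edge of $(\mathcal H/e_0)^*$ comes from an edge of $\mathcal H$ meeting $e_0$ in at most one vertex and therefore keeps its even size (this is Lemma~\ref{LE2.5}). A short case analysis---according to whether the contracted vertex $w$ lies on a given cycle of $(\mathcal H/e_0)^*$, and if so whether the two incident edges came from edges of $\mathcal H$ meeting $e_0$ in the same vertex or in distinct vertices---lifts every cycle of $(\mathcal H/e_0)^*$ to a cycle of $\mathcal H$ and transports the size-$2$ witness back down, yielding $(\mathcal H/e_0)^*\in\hyl_0$ outright (Lemma~\ref{LE2.6}). With this in hand the induction closes immediately, and $b((\mathcal H/e_0)^*)<b(\mathcal H)$ as well, so your choice of induction parameter also works.
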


\red{
Theorem~\ref{Th-main1} implies  
Theorem~\ref{doh-th} directly,
as $\hyl_1\subset \hyl_0$.}

\begin{theo}\label{Th-main2}
\red{If $\mathcal{H}=(V, \hye)$ is a hypergraph in $\hyl_0'$, then} 
\begin{enumerate}
\item[(i)] \red{$(-1)^{|V|+1}P(\mathcal{H}, \lambda)>0$}
holds for all real $\lambda\in (0,1)$; and 
\item[(ii)] $P(\mathcal{H}, \lambda)$ has no multiple 
\red{zero} at $\lambda=0$.
\end{enumerate}
\end{theo}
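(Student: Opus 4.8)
The plan is to prove both theorems simultaneously by induction on the number of edges of size greater than $2$, reducing such hypergraphs to graphs where the classical zero-free interval results apply. First I would set up a \emph{deletion–contraction-type reduction} for a ``big'' edge $e$ with $|e|=2m\ge 4$. The key identity is the standard one for weak proper colorings: for $e\in\hye(\hyh)$,
\begin{equation}\label{eq:delcon}
P(\hyh,\lambda)=P(\hyh-e,\lambda)-P(\hyh/e,\lambda),
\end{equation}
where $\hyh-e$ deletes the edge $e$ and $\hyh/e$ is the hypergraph obtained by \emph{identifying all vertices of $e$ into a single vertex} (since a coloring of $\hyh-e$ violates the constraint at $e$ exactly when it is monochromatic on $e$). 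Here $\hyh-e$ still lies in $\hyl_0$ (deleting an edge cannot create a new cycle, and every surviving cycle already contained a $2$-edge on its vertex set, which is still present), and $\hyh/e$ — after cleaning up: discarding edges that become singletons and merging parallel edges — must also be shown to lie in $\hyl_0$, possibly with fewer vertices. The induction hypothesis applied to both smaller terms, together with a sign bookkeeping ($|V(\hyh-e)|=|V|$ and $|V(\hyh/e)|=|V|-2m+1$, so the two signs $(-1)^{|V|}$ and $(-1)^{|V|-2m+1}=-(-1)^{|V|}$ since $2m$ is even — this is exactly why evenness of edge sizes is needed), makes the two contributions in \eqref{eq:delcon} add up rather than cancel.

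Next I would handle the base case, where $\hyh$ has only edges of size $1$ or $2$. An edge of size $1$ forces $P(\hyh,\lambda)=0$ unless we interpret it away; more carefully, a hypergraph in $\hyl_0$ with all edges of even size has no size-$1$ edges at all, so the base case is literally a graph $G$, and $P(\hyh,\lambda)=P(G,\lambda)$. For Theorem~\ref{Th-main1} the sign statement $(-1)^{|V|}P(G,\lambda)>0$ on $(-\infty,0)$ is the classical fact (a special case of Theorem~\ref{doh-th}, or directly from the broken-cycle / Whitney-rank expansion). For Theorem~\ref{Th-main2}, the extra hypothesis defining $\hyl_0'$ is that the spanning subgraph $(V,\hye_2)$ is connected; under the reduction, contracting a big edge keeps the graph of $2$-edges connected and spanning (the identified vertex inherits all incident $2$-edges, and connectivity through $e$'s vertices is preserved because each cycle using $e$ carries a $2$-edge), so the class $\hyl_0'$ is closed under the reduction and the base case is a \emph{connected} graph $G$. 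For a connected graph, $(-1)^{|V|+1}P(G,\lambda)>0$ on $(0,1)$ is the classical Woodall/Jackson-type result, and $P(G,\lambda)$ has a simple zero at $\lambda=0$ (indeed $P(G,\lambda)=\lambda\cdot Q(\lambda)$ with $Q(0)=(-1)^{|V|-1}T_G(2,0)\ne0$, where $T_G$ is the Tutte polynomial, or simply $P(G,\lambda)/\lambda$ evaluated at $0$ counts acyclic orientations up to sign).

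I would then push the sign/simplicity bookkeeping through the induction for part (ii) as well: writing $P(\hyh,\lambda)=\lambda^{c(\hyh)}R_\hyh(\lambda)$ and tracking that $R_\hyh(0)\ne0$ via \eqref{eq:delcon}, noting that $\hyh-e$ and $\hyh/e$ have the \emph{same} number of components (the $2$-edge carried by each cycle through $e$ keeps those vertices connected after contraction), so the two terms have the same power of $\lambda$ and their constant terms, after dividing out $\lambda^{c}$, are both nonzero of the \emph{same} sign by induction — hence do not cancel. The cleanup step when forming $\hyh/e$ is where I expect the main obstacle: one must verify that identifying the $2m$ vertices of $e$ does not spawn a ``bad'' cycle, i.e.\ a cycle all of whose vertex-pairs fail to form an edge. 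A cycle in $\hyh/e$ either lifts to a cycle in $\hyh$ (which already had a $2$-edge on its vertices, still present), or it passes through the contracted vertex and corresponds in $\hyh$ to a path-like structure closed up through two vertices of $e$; in the latter case I would use that $e$ itself has size $\ge4\ge2$ together with the even-size hypothesis to either find the required $2$-edge among the original edges incident to $e$'s vertices, or to argue that the would-be cycle in $\hyh/e$ actually has length $1$ (a loop) or creates parallel edges and is therefore discarded in the cleanup. Making this case analysis airtight — precisely defining $\hyh/e$ so that it genuinely lands in $\hyl_0$ (resp. $\hyl_0'$) — is the technical heart of the argument; everything else is sign arithmetic driven by the evenness of $|e|$ and the classical graph results.
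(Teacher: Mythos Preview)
Your plan is essentially the paper's own proof: induct on the number of edges of size $>2$, apply deletion--contraction \eqref{eq:delcon} to a big edge $e_0$, use the evenness of $|e_0|$ for the sign bookkeeping, and reduce the base case to the classical results for (connected) graphs. The paper packages part~(ii) exactly as you suggest, setting $Q(\hyh,\lambda)=P(\hyh,\lambda)/\lambda$ and proving $(-1)^{|V|+1}Q(\hyh,\lambda)>0$ on $[0,1)$, which gives both the sign on $(0,1)$ and the simple zero at $0$ in one stroke.

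One point deserves care: your description of the cleanup as ``discarding edges that become singletons and merging parallel edges'' is not quite enough. After contracting $e_0$, an edge $e_1$ with $|e_1\cap e_0|\ge 2$ drops to size $|e_1|-|e_1\cap e_0|+1$, which need not be even and need not be a singleton or a duplicate. The paper's fix is to pass to the full \emph{Sperner subhypergraph} $(\hyh/e_0)^*$ and prove (its Lemma~3.1) that every such $e_1$ becomes non-minimal there: the $\hyl_0$ hypothesis applied to the $2$-cycle $(e_0,e_1)$ produces a $2$-edge $e_2=\{x,y\}$ with $x\in e_1\setminus e_0$ and $y\in e_0\setminus e_1$, whose image $\{x,w\}$ is contained in the image of $e_1$. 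Hence only edges with $|e\cap e_0|\le 1$ survive, and those keep their (even) size. This lemma, together with the case analysis you anticipate for cycles through $w$, is exactly the ``technical heart'' you flag; once it is in place, your argument and the paper's coincide.
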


\red{Note that Theorem \ref{Th-main2}
fails for some hypergraphs in $\hyl_0\setminus \hyl_0'$. 
For instance,
the hypergraph $\hyh$ shown in Figure \ref{nFig2} 
belongs to  $\hyl_0\setminus \hyl_0'$ 
and its chromatic polynomial is
\[
P(\hyh, \lambda)=\lambda(\lambda-1)(\lambda-2)(\lambda(\lambda-2)^2-1),
\]
which has a real zero around 0.38,
implying that 
Theorem \ref{Th-main2} fails for this hypergraph.}

\begin{figure}[htp]
\centering
\includegraphics[scale=0.6]{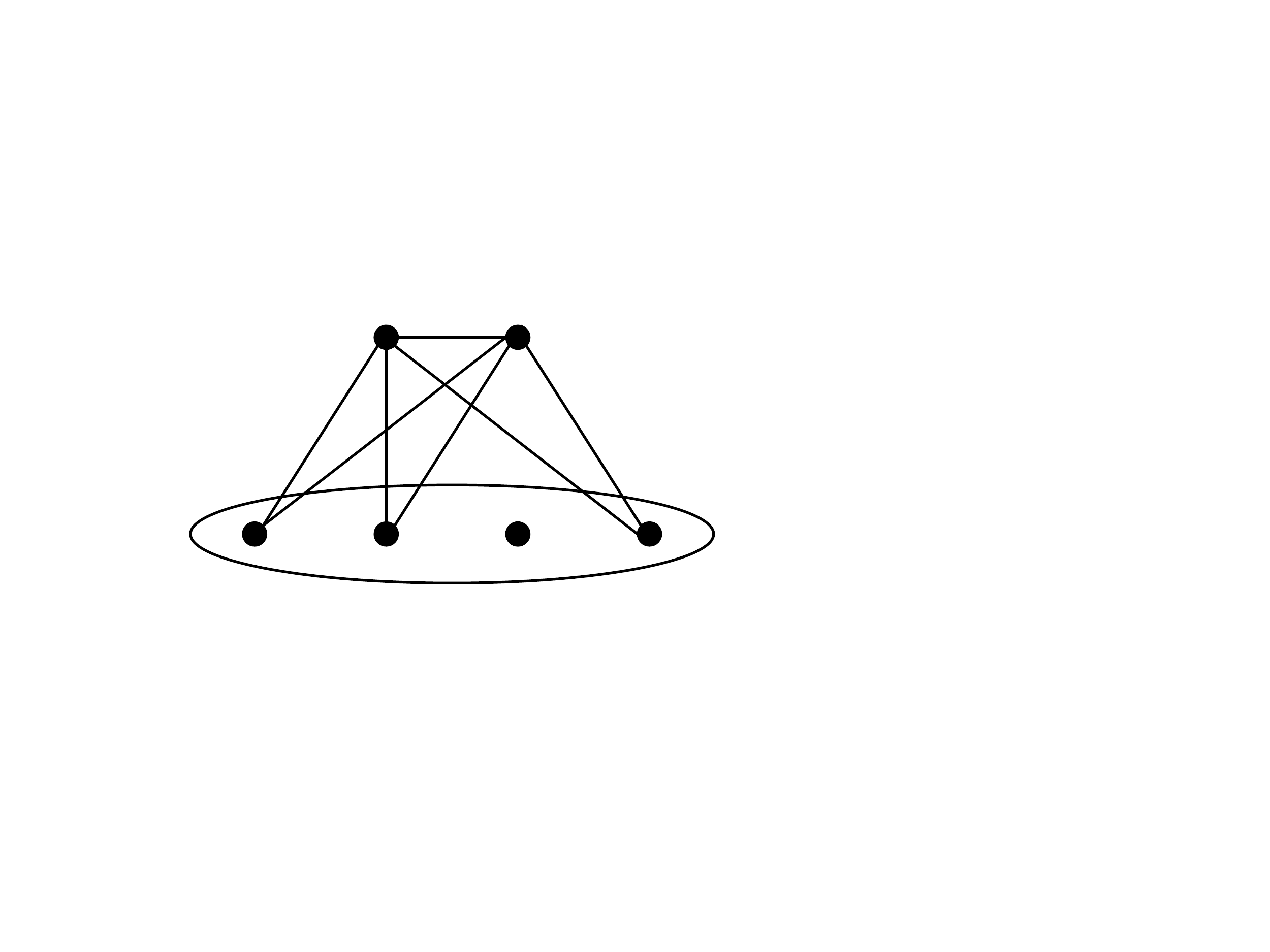}
\caption{A hypergraph in $\hyl_0\setminus \hyl_0'$}
\label{nFig2}
\end{figure}

\red{The proofs of Theorems~\ref{Th-main1} 
and~\ref{Th-main2} are given in Section~\ref{S4}.}

\section{Preliminaries}\label{S2}

In this section, 
we present several known results on 
chromatic polynomials of hypergraphs, 
which will be applied later.
The first one follows directly from the definition of weak proper colorings of a hypergraph.

\begin{prop}\up{\cite{jones1976some}}\label{PP2.1}
Let \red{$e_1$ and $e_2$ be} two edges in a hypergraph $\hyh$. 
If $e_1\subseteq e_2$, then 
$$P(\hyh, \lambda)=P(\hyh-e_2, \lambda),$$
where $\hyh-e_2$ is the hypergraph obtained from $\hyh$ by removing $e_2$.
\end{prop}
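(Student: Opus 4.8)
The plan is to prove this polynomial identity by showing that, for every positive integer $k$, a mapping $\phi\colon V(\hyh)\to\{1,\dots,k\}$ is a weak proper $k$-coloring of $\hyh$ if and only if it is a weak proper $k$-coloring of $\hyh-e_2$. Once the two sets of colorings coincide for every $k$, the two polynomials $P(\hyh,\lambda)$ and $P(\hyh-e_2,\lambda)$ agree at infinitely many integers and are therefore equal as polynomials, which is all that is claimed.

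First I would note that $\hyh-e_2$ has the same vertex set as $\hyh$ and edge set $\hye(\hyh)\setminus\{e_2\}$; since $e_1$ and $e_2$ are two (hence distinct) edges with $e_1\subseteq e_2$, the edge $e_1$ survives the deletion, i.e. $e_1\in\hye(\hyh-e_2)$. The forward implication is immediate: if $\phi$ satisfies $|\{\phi(v):v\in e\}|\ge 2$ for every $e\in\hye(\hyh)$, then a fortiori it satisfies this for every $e\in\hye(\hyh-e_2)\subseteq\hye(\hyh)$. For the converse, suppose $\phi$ is a weak proper $k$-coloring of $\hyh-e_2$. Then $|\{\phi(v):v\in e_1\}|\ge 2$, so there are vertices $x,y\in e_1$ with $\phi(x)\ne\phi(y)$. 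Because $e_1\subseteq e_2$, both $x$ and $y$ lie in $e_2$, so $|\{\phi(v):v\in e_2\}|\ge 2$ as well. Hence $\phi$ meets the constraint imposed by $e_2$ in addition to the constraints of all the other edges of $\hyh$, so $\phi$ is a weak proper $k$-coloring of $\hyh$. This gives the equality of the two coloring sets, hence $P(\hyh,k)=P(\hyh-e_2,k)$ for all positive integers $k$, and the polynomial identity follows since both sides are polynomials in $\lambda$.

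I do not anticipate any genuine obstacle: the substance is simply that the edge $e_2$ imposes a constraint that is redundant once $e_1$ is present, so removing $e_2$ changes neither the family of weak proper colorings nor, consequently, the chromatic polynomial. The only minor points to keep straight are that $e_1$ remains an edge after $e_2$ is deleted (which uses $e_1\ne e_2$) and that $P(\hyh,\lambda)$ and $P(\hyh-e_2,\lambda)$ are indeed polynomials, a fact already recalled in the introduction.
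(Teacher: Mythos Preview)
Your argument is correct and is exactly the approach the paper indicates: the paper does not spell out a proof but simply remarks that the proposition ``follows directly from the definition of weak proper colorings of a hypergraph,'' which is precisely what you have written out in detail.
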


Proposition \ref{PP2.1} shows that 
we need only to consider Sperner hypergraphs when studying chromatic 
polynomials of hypergraphs,
where a hypergraph $\hyh=(V,\hye)$ is called {\it Sperner}
if $e_1\nsubseteq e_2$ holds for 
any distinct edges $e_1, e_2\in \hye$.

For any hypergraph $\hyh=(V,\hye)$, 
its \textit{Sperner subhypergraph} 
is defined to be $\hyh^*=(V, \hye^*)$, 
where $\hye^*$ is the minimal subset of $\hye$ such that for each $e\in \hye\setminus \hye^*$,
there exists $e'\in \hye^*$ with $e'\subseteq e$.
That means, $\hyh^*$ can be obtained from $\hyh$ by removing any edge 
$e\in \hye$ whenever $e'\subseteq e$ holds for another edge $e'\in \hye$.        
It is not difficult to show that $\hye^*$ is uniquely determined by $\hye$.
Obviously, $\hyh^*$ is Sperner,  and 
$\hyh^*=\hyh$ when $\hyh$ is Sperner.
Moreover, by Proposition \ref{PP2.1},
\begin{equation}\label{nEq1}
P(\hyh, \lambda)=P(\hyh^*, \lambda).
\end{equation}

\red{The following proposition can be obtained from the definition of Sperner hypergaphs directly.}

\begin{prop}\label{fact}
\red{If $\hyh=(V, \hye)$ is Sperner and $e_0$ is an edge in $\hyh$
with $|e_0|=2$,
then $|e_0\cap e|\leq 1$ holds for any edge $e\in \hye\setminus \{e_0\}$.}
\end{prop}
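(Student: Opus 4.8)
The plan is to argue by contradiction and simply unpack the two hypotheses. Suppose, for some edge $e\in \hye\setminus\{e_0\}$, that $|e_0\cap e|\geq 2$. Since $e_0\cap e\subseteq e_0$ and $|e_0|=2$, the only possibility is $e_0\cap e=e_0$, i.e.\ $e_0\subseteq e$. As $e\neq e_0$ by assumption, this contradicts the defining property of a Sperner hypergraph, namely that $e_1\nsubseteq e_2$ holds for all distinct $e_1,e_2\in\hye$. Hence no such $e$ exists, and $|e_0\cap e|\leq 1$ for every $e\in\hye\setminus\{e_0\}$.

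There is no real obstacle here: the statement is an immediate consequence of combining the size constraint $|e_0|=2$ with the Sperner condition. The only points requiring any care are (i) to record that $e_0\cap e\subseteq e_0$, so that $|e_0\cap e|\in\{0,1,2\}$ and the sole alternative to the desired bound is $|e_0\cap e|=2$; and (ii) to use the hypothesis $e\neq e_0$ explicitly, since the Sperner condition is a statement about \emph{distinct} edges. One could equally phrase the argument directly rather than by contradiction, but the contradiction form makes the role of the Sperner property most transparent.
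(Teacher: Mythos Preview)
Your argument is correct and is exactly the direct unpacking the paper has in mind: the paper does not give a detailed proof but simply remarks that the proposition ``can be obtained from the definition of Sperner hypergraphs directly.'' Your contradiction argument is precisely that direct derivation.
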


Let $\hyh=(V, \hye)$ be a hypergraph and $V_0\subset V$. 
Let $\hyh\cdot V_0$ denote the hypergraph obtained from $\hyh$ by identifying all vertices in $V_0$ as one, 
i.e., $\hyh\cdot V_0$ is the hypergraph with vertex set $(V-V_0)\cup \{w\}$ 
and edge set 
$$
\{e\in \hye: e\cap V_0=\emptyset\}\cup \{(e-V_0)\cup \{w\}: e\in \hye, e\cap V_0\neq \emptyset\},
$$
\red{where $w$ is the new vertex produced
when $\hyh\cdot V_0$ is obtained from 
$\hyh$ by identifying all vertices in $V_0$.
For an edge $e$ in $\hyh$, 
let $\hyh/e$ denote the hypergraph $(\hyh-e)\cdot e$.
We also say that
$\hyh/e$ is obtained from $\hyh$ by \textit{contracting} the edge $e$.}

The deletion-contraction formula for chromatic polynomials of graphs is very important for the computation of this polynomial \cite{biggs1993algebraic,
birkhoff1946chromatic, fengming2005chromatic, read1968introduction, 
read1988chromatic}.
It was extended to chromatic polynomials of hypergraphs by Jones \cite{jones1976colourings}.

\begin{theo}\up{\cite{jones1976colourings}}\label{TH2.3}
Let $\hyh=(V, \hye)$ be a hypergraph.
For any $e\in \hye$, 
\begin{equation}
P(\hyh, \lambda)=P(\hyh-e, \lambda)-P(\hyh/e, \lambda).
\end{equation}
\end{theo}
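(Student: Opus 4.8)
The plan is to establish the identity first for every positive integer value $\lambda=k$ by a direct counting argument, and then to upgrade it to an identity of polynomials: since $P(\hyh,\lambda)$, $P(\hyh-e,\lambda)$ and $P(\hyh/e,\lambda)$ are all polynomials in $\lambda$ (recalled above), agreement at all positive integers forces agreement as polynomials. So fix a positive integer $k$ and an edge $e\in\hye$.

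First I would count weak proper $k$-colorings of $\hyh-e$, i.e.\ maps $\phi:V\to\{1,\dots,k\}$ with $|\{\phi(v):v\in f\}|\ge 2$ for every $f\in\hye\setminus\{e\}$; these number $P(\hyh-e,k)$. I would then split them according to whether $e$ is bichromatic or monochromatic under $\phi$. Those bichromatic on $e$ are exactly the weak proper $k$-colorings of $\hyh$, contributing $P(\hyh,k)$. Hence it remains to show that the number of weak proper $k$-colorings of $\hyh-e$ that are monochromatic on $e$ equals $P(\hyh/e,k)$.

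For the latter I would build an explicit bijection with the weak proper $k$-colorings of $\hyh/e=(\hyh-e)\cdot e$. Writing $w$ for the vertex of $\hyh/e$ obtained by identifying all vertices of $e$, send such a $\phi$ to the map $\psi$ on $(V\setminus e)\cup\{w\}$ with $\psi|_{V\setminus e}=\phi|_{V\setminus e}$ and $\psi(w)$ the common $\phi$-value on $e$; the inverse spreads the colour of $w$ over all of $e$. The one thing to verify is that $\phi$ meets the colour constraints of $\hyh-e$ iff $\psi$ meets those of $\hyh/e$: an edge $f$ with $f\cap e=\emptyset$ is unchanged, while for $f$ with $f\cap e\ne\emptyset$ one has $\{\phi(v):v\in f\}=\{\phi(v):v\in f\setminus e\}\cup\{\psi(w)\}=\{\psi(v):v\in (f\setminus e)\cup\{w\}\}$ because $\phi$ is constant (with value $\psi(w)$) on the nonempty set $f\cap e$; so the ``$\ge 2$ colours'' condition transfers edge by edge, and any coincidences among the sets $(f\setminus e)\cup\{w\}$ merely repeat a constraint and do not affect either count.

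The argument is essentially bookkeeping, so I do not expect a real obstacle; the only points needing a little care are the degenerate cases --- when $|e|=1$ (both sides are $0$, with $\hyh/e=\hyh-e$), when deleting $e$ empties the edge set, and when contracting $e$ merges two distinct edges of $\hyh-e$ --- and remembering at the end that validity at all positive integers yields the claimed polynomial identity.
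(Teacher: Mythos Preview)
Your argument is correct and is the standard counting proof of the deletion--contraction identity: partition the weak proper $k$-colourings of $\hyh-e$ according to whether $e$ is monochromatic, identify the bichromatic ones with colourings of $\hyh$, set up the obvious bijection between the monochromatic ones and colourings of $\hyh/e$, and then pass from integers to polynomials. The verification that the edge constraints transfer under the bijection is handled cleanly, and the degenerate cases you flag are real but harmless.

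There is, however, nothing to compare against: the paper does not give a proof of this theorem. It is stated as a known result and attributed to Jones~\cite{jones1976colourings}; the authors simply invoke it as a tool in the proofs of Theorems~\ref{Th-main1} and~\ref{Th-main2}. So your proposal stands on its own as a self-contained justification, but it is not filling a gap the paper intended to fill.
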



\section{$\hyl_0$ is \red{deletion and contraction closed}}

In this section, we show that 
for any Sperner hypergraph $\hyh\in \hyl_0$ with at least one edge,
there is an edge $e_0$ in $\hyh$ 
such that both $\hyh-e_0$ and $(\hyh/e_0)^*$
belong to $\hyl_0$.

For any hypergraph $\hyh=(V, \hye)$ and $e\in \hye$,
let $\hyh/e$ be written as $(V/e, \hye/e)$.
By the notation for the Sperner subhypergraph,
$(\hyh/e)^*$ is written as $(V/e, (\hye/e)^*)$.

\begin{lemm}\label{LE2.5}
Let $\hyh=(V, \hye)\in \hyl_0$ 
and $e_0\in \hye$. Assume that $\hyh$ is Sperner.
Then, 
\begin{equation}\label{Eq5}
(\hye/e_0)^*\subseteq \{e\in \hye: e\cap e_0=\emptyset \}\cup 
\{(e-e_0)\cup \{w\}: |e\cap e_0|=1, e\in \hye\},
\end{equation}
where $w\notin V$.
\end{lemm}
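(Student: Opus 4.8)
The plan is to analyze directly the edge set $\hye/e_0$ of $\hyh/e_0$ and identify which of its members can possibly survive in the Sperner reduction $(\hye/e_0)^*$. Recall from the definition of contraction that
\[
\hye/e_0=\{e\in\hye: e\cap e_0=\emptyset\}\cup\{(e-e_0)\cup\{w\}: e\in\hye,\ e\cap e_0\neq\emptyset\},
\]
where $w$ is the new vertex obtained by identifying the vertices of $e_0$. So every member of $\hye/e_0$ is either an old edge disjoint from $e_0$, or of the form $(e-e_0)\cup\{w\}$ for some $e\in\hye$ meeting $e_0$. To prove \eqref{Eq5} it suffices to show that any member of the second type with $|e\cap e_0|\geq 2$ is removed in passing to the Sperner subhypergraph, i.e.\ it properly contains another member of $\hye/e_0$.

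First I would note that $e_0$ itself satisfies $e_0\cap e_0\neq\emptyset$, so $(e_0-e_0)\cup\{w\}=\{w\}$ is a member of $\hye/e_0$. Now take any $e\in\hye$ with $|e\cap e_0|\geq 2$, and consider the corresponding member $f=(e-e_0)\cup\{w\}$ of $\hye/e_0$. The key observation is that $\{w\}\subseteq f$, and $\{w\}$ is itself an edge of $\hyh/e_0$ (coming from $e_0$). Hence when forming $(\hye/e_0)^*$ the edge $f$ is deleted, since the smaller edge $\{w\}$ is contained in it. (If $f=\{w\}$ already, then $f$ is not of the claimed form either, but it is harmless: it simply does not contribute a set of the type $(e-e_0)\cup\{w\}$ with $|e\cap e_0|=1$; one should just be slightly careful in the write-up to state that such $f$ need not appear in $(\hye/e_0)^*$, or appears as the singleton $\{w\}$, which is consistent with the right-hand side of \eqref{Eq5} being an upper bound only if the singleton $\{w\}$ is also covered.)

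Here is where a small subtlety arises that I would want to handle cleanly: the right-hand side of \eqref{Eq5} as written does not obviously contain the singleton $\{w\}$, so I must argue that $\{w\}\notin(\hye/e_0)^*$ unless it is forced. In fact, since $|e_0|=2$ and $\hyh$ is Sperner, by Proposition~\ref{fact} every other edge $e$ meets $e_0$ in at most one vertex; consequently for every $e\neq e_0$ with $e\cap e_0\neq\emptyset$ we have $|e\cap e_0|=1$ and $(e-e_0)\cup\{w\}$ is a set of size $|e|-1+1=|e|\geq 2$ (using that all edges have even, hence positive, size, so $|e|\geq 2$). Thus the only edge producing the singleton $\{w\}$ is $e_0$ itself; but once any edge $e\neq e_0$ meeting $e_0$ is present — and in any cycle through $e_0$ there is such an edge — the singleton $\{w\}$ dominates and is kept, while... actually the cleaner route is: $\{w\}$ may or may not be in $(\hye/e_0)^*$, but it is never of the form $(e-e_0)\cup\{w\}$ with $|e\cap e_0|=1$. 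To keep \eqref{Eq5} literally true I would therefore strengthen the right-hand side to also include the singleton $\{w\}$, or alternatively remark that since $\hyh$ is Sperner with $|e_0|=2$, the singleton never arises in $(\hye/e_0)^*$ because... — in the write-up I would pick whichever of these the authors intend and simply check it against the definition of $(\hye/e_0)^*$.

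The only real obstacle is this bookkeeping around the singleton $\{w\}$ and the degenerate case $e\subseteq e_0$; everything else is immediate from the definition of contraction, the definition of the Sperner subhypergraph, and Proposition~\ref{fact}. No property of $\hyl_0$ (the cycle condition) is needed for Lemma~\ref{LE2.5} itself — it is purely a statement about Sperner hypergraphs with a size-$2$ edge $e_0$ — so I would not invoke the membership $\hyh\in\hyl_0$ beyond what is stated, and in the final version I might even remark that the hypothesis $\hyh\in\hyl_0$ is used only to guarantee, elsewhere, that such an $e_0$ exists.
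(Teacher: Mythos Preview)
Your argument rests on a misreading of the definition of contraction. In this paper, $\hyh/e_0$ is defined as $(\hyh-e_0)\cdot e_0$: the edge $e_0$ is \emph{deleted} before the vertices of $e_0$ are identified. Consequently $e_0$ itself does not contribute any edge to $\hye/e_0$, and since $\hyh$ is Sperner no other edge $e\in\hye$ is contained in $e_0$; hence the singleton $\{w\}$ is \emph{not} a member of $\hye/e_0$. Your whole mechanism --- that $\{w\}\in\hye/e_0$ sits inside every $(e-e_0)\cup\{w\}$ and therefore kills it in the Sperner reduction --- collapses. (Note too that if it did work, it would delete every edge of $\hye/e_0$ containing $w$, including those coming from $e$ with $|e\cap e_0|=1$, which is far more than the lemma asserts and is false in general.)

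A second slip: you write ``since $|e_0|=2$ and $\hyh$ is Sperner, by Proposition~\ref{fact}\dots''. The lemma does not assume $|e_0|=2$; in fact its only application in the paper (Lemma~\ref{LE2.6}) takes $|e_0|>2$, precisely the regime where edges $e_1$ with $|e_1\cap e_0|\ge 2$ can and do occur.

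Finally, your claim that the hypothesis $\hyh\in\hyl_0$ is idle is wrong, and this is exactly what the paper uses to fill the gap. Given $e_1\in\hye$ with $|e_1\cap e_0|\ge 2$, pick $u,v\in e_1\cap e_0$; then $(u,e_0,v,e_1,u)$ is a cycle, so the $\hyl_0$ condition yields an edge $e_2=\{x,y\}\subseteq e_0\cup e_1$. The Sperner property forces $e_2\nsubseteq e_0$ and $e_2\nsubseteq e_1$, so after relabeling $x\in e_1\setminus e_0$ and $y\in e_0\setminus e_1$; thus $|e_2\cap e_0|=1$ and $(e_2-e_0)\cup\{w\}=\{x,w\}\subseteq (e_1-e_0)\cup\{w\}$, which is what removes the latter from $(\hye/e_0)^*$. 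That is the actual argument, and it genuinely needs the cycle condition defining $\hyl_0$.
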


\begin{proof}
By the definition of $\hyh/e_0$, we have
\begin{equation}\label{Eq6}
\hye/e_0=\{e\in \hye: e\cap e_0=\emptyset\}\cup \{(e-e_0)\cup \{w\}: e\in \hye, e\cap e_0\neq \emptyset\}.
\end{equation}

By the definition of $(\hyh/e_0)^*$ and (\ref{Eq6}), 
to prove (\ref{Eq5}),
it suffices to show that for any $e_1\in \hye$ with $|e_1\cap e_0|\geq 2$, 
there exists $e_2\in \hye$ with $|e_2\cap e_0|=1$
such that $e_2-e_0\subseteq e_1-e_0$.

\red{
Let $e_1\in \hye$ with $\{u, v\}\subseteq e_1\cap e_0$, 
as shown in Figure \ref{Fig1}.
Observe that $(u, e_0, v, e_1, u)$ forms a cycle in $\hyh$.
Since $\hyh\in \hyl_0$, 
there exists an edge $e_2=\{x, y\}\in \hye$ such that $e_2\subseteq e_0\cup e_1$.
Moreover, $e_2\not\subseteq e_i$ 
for $i\in \{0,1\}$ as $\hyh$ is Sperner.
Without loss of generality,
we assume
 that $x\in e_1-e_0$ and $y\in e_0-e_1$,
as shown in Figure \ref{Fig1}.
Clearly, $e_2-e_0=\{x\}\subseteq e_1-e_0$, as required. 
Thus the result holds.}
\end{proof}
\begin{figure}[htp]
\labellist
\pinlabel {$e_0$} at 180 440
\pinlabel $e_1$ at 180 290
\pinlabel $u$ at 155 360
\pinlabel $v$ at 220 360
\pinlabel {$x$} at 220 335
\pinlabel {$y$} at 220 400
\pinlabel {$e_2$} at 250 350
\endlabellist
\centering
\includegraphics[scale=0.65]{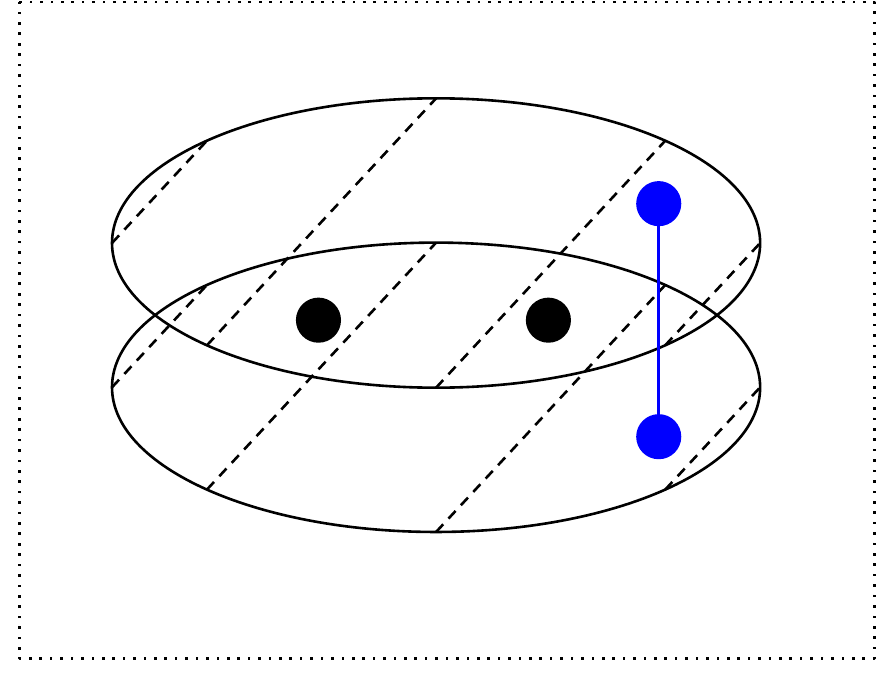}\\
\caption{A hypergraph $\hyh\in \hyl_0$ with $|e_1\cap e_0|\geq 2$}\label{Fig1}
\end{figure}


If $\hyh\in \hyl_0$ is a graph, then it is
obvious that $\hyh-e_0\in \hyl_0$ and $(\hyh/e_0)^*\in \hyl_0$.
In the following, we consider the case that
$\hyh$ is not a graph, i.e., there exists at least one edge
$e\in \hye$ with $|e|>2$. 

\begin{lemm}\label{LE2.6}
Let $\hyh=(V, \hye)\in \hyl_0$ 
and $e_0\in\hye$ with $|e_0|>2$. 
Assume that $\hyh$ is Sperner.
Then, 
$\hyh-e_0\in \hyl_0$ and $(\hyh/e_0)^*\in \hyl_0$.
\end{lemm}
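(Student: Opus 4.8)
The plan is to verify the two defining properties of $\hyl_0$ separately for $\hyh-e_0$ and for $(\hyh/e_0)^*$, namely (a) every edge has even size, and (b) every cycle contains two distinct vertices (among the vertices covered by the cycle's edges) that form an edge. For $\hyh-e_0$, property (a) is immediate since we only delete an edge, and every cycle $C$ of $\hyh-e_0$ is also a cycle of $\hyh$; the two vertices $u,v$ guaranteed by $\hyh\in\hyl_0$ form an edge $e'$ of $\hyh$, and I must argue $e'\neq e_0$ so that $e'$ survives in $\hyh-e_0$. Here I will use $|e_0|>2$: the guaranteed pair need not lie on $e_0$, but even if it did, the relevant edge $e'$ has $|e'|=2$ whenever we choose it minimal — actually the definition only asks for \emph{some} edge on two cycle-vertices, and since $|e_0|>2$, if the only such edge were $e_0$ we would still need a smaller one. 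I would phrase it cleanly: pick $e'$ with $|e'|=2$ if possible; since $\hyh\in\hyl_0$ the pair $u,v\in\bigcup e_i$ forms an edge, and if that edge is $e_0$ then $|e_0|=2$, contradiction, so $e'\neq e_0$ and $e'\in\hye(\hyh-e_0)$. Thus $\hyh-e_0\in\hyl_0$.

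For $(\hyh/e_0)^*$, property (a) follows from Lemma~\ref{LE2.5}: by \eqref{Eq5} every edge of $(\hyh/e_0)^*$ is either an edge $e\in\hye$ with $e\cap e_0=\emptyset$, which has even size, or of the form $(e-e_0)\cup\{w\}$ with $|e\cap e_0|=1$, which has size $|e|-1+1=|e|$, again even. The substance is property (b). I would take a cycle $C=(f_1,\dots,f_r)$ in $(\hyh/e_0)^*$, with witnessing distinct vertices realizing the consecutive intersections, and lift it to a closed walk/cycle in $\hyh$: each $f_j$ is either an original edge $e_j\in\hye$ disjoint from $e_0$, or comes from an edge $e_j\in\hye$ meeting $e_0$ in exactly one vertex. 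Replacing each occurrence of $w$ along the cycle by an appropriate vertex of $e_0$, and inserting $e_0$ itself when the walk passes through $w$ via two different edges meeting $e_0$, produces a closed walk in $\hyh$ through $e_0\cup\bigcup e_j$; from it I can extract a genuine cycle $C'$ of $\hyh$ whose vertex set is contained in $e_0\cup\bigcup_j (e_j)$. Since $\hyh\in\hyl_0$, $C'$ has two distinct vertices $a,b$ in its covered vertex set forming an edge $e'$ of $\hyh$.

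The main obstacle, as expected, is translating that edge $e'$ of $\hyh$ into an edge of $(\hyh/e_0)^*$ lying on two distinct vertices of the \emph{original} cycle $C$. There are cases according to how $e'$ meets $e_0$: if $e'\cap e_0=\emptyset$, then $e'$ itself is (a superset of) an edge of $\hyh/e_0$, so some $e''\in(\hye/e_0)^*$ with $e''\subseteq e'$ works, and its (at most two) vertices lie among $C$'s vertices — here I need to track carefully that $a,b$ were actually among the lifted cycle's vertices that correspond to vertices of $C$, not auxiliary vertices of $e_0$ introduced during the lift. If $|e'\cap e_0|=1$, then $(e'-e_0)\cup\{w\}$ contains an edge of $\hyh/e_0$, hence an edge $e''\in(\hye/e_0)^*$ with $e''\subseteq (e'-e_0)\cup\{w\}$; since $|e''|=2$ and $w$ plays the role of a cycle vertex of $C$, $e''$ sits on two vertices of $C$. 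If $|e'\cap e_0|\geq 2$, apply Lemma~\ref{LE2.5}'s key step (or Lemma~\ref{LE2.5} itself) to replace $e'$ by a size-$2$ edge $e_2\subseteq e_0\cup e'$ with one endpoint in $e'-e_0$, reducing to the previous case. The bookkeeping — ensuring the two endpoints we end up with are genuinely distinct and genuinely among the vertices (or the contracted vertex $w$) of the cycle $C$ we started with — is where care is needed; I would organize it by first fixing, for each $f_j$, a chosen preimage edge $e_j$ and a chosen preimage vertex for each cycle-vertex of $C$ (with $w$'s preimage being the single vertex of $e_j\cap e_0$ that got merged), then doing the case analysis on $|e'\cap e_0|$ as above. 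Finally, $(\hyh/e_0)^*$ is Sperner by construction, completing the proof that it lies in $\hyl_0$.
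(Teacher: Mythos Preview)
Your approach is essentially the paper's: verify even edge sizes via Lemma~\ref{LE2.5}, lift a cycle of $(\hyh/e_0)^*$ to a cycle of $\hyh$ (possibly routed through $e_0$), apply the $\hyl_0$ hypothesis there, and push the resulting edge back down. Two clarifications are worth making. First, in the definition of $\hyl_0$ the phrase ``$u,v$ forming an edge'' means $\{u,v\}$ \emph{is} an edge of $\hye$, so the witness $e'$ always has size exactly $2$; by Proposition~\ref{fact} your case $|e'\cap e_0|\ge 2$ is therefore impossible, and this also settles your hesitation for $\hyh-e_0$ immediately (since $|e_0|>2$ forces $e'\ne e_0$). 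Second, the paper sidesteps your ``extract a cycle from a closed walk'' step by first reducing to a \emph{minimal} cycle $C^*$ in $(\hyh/e_0)^*$; then $w$ lies in at most the two edges $e_1^*,e_t^*$ adjacent to it as a cycle vertex, so the lift (replacing $w$ by $x$, or inserting $e_0$ between $x$ and $y$ when $e_1\cap e_0\ne e_t\cap e_0$) is directly a cycle in $\hyh$ with edge set contained in $\{e_0,e_1,\dots,e_t\}$, making the back-translation of $\{u,v\}$ into $\bigcup_i e_i^*$ a clean two-case check.
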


\begin{proof}
It is obvious that $\hyh-e_0\in \hyl_0$
as it is obtained from $\hyh$ by removing $e_0$ directly.

\red{Let $(\hyh/e_0)^*$ be the hypergraph $(V/e_0, (\hye/e_0)^*)$.
For any vertex $u$ in $(\hyh/e_0)^*$, 
by the definition of contracting,
either $u\in V-e_0$ or $u=w$, 
where $w$ is the new vertex in 
$\hyh/e_0$ after contracting $e_0$
in $\hyh$.}
For any edge $e^*\in (\hye/e_0)^*$,
by Lemma \ref{LE2.5}, 
either $e^*\in \{e\in \hye: e\cap e_0=\emptyset\}$ or 
$e^*\in \{(e-e_0)\cup \{w\}: e\in \hye, |e\cap e_0|=1\}$,
implying that $e^*$ always has the even size as 
$\hyh\in \hyl_0$. 
Thus, to show that $(\hyh/e_0)^*\in \hyl_0$,
it remains proving the following claim.

\noindent {\bf Claim}:
For any cycle $C^*=(u_1, e_1^*, u_2, e_2^*, 
\ldots, u_t, e_t^*, u_1)$ in $(\hyh/e_0)^*$,
there exist $u,v\in \bigcup_{1\le i\le t}e^*_i$ 
such that $\{u,v\}\in (\hye/e_0)^*$.

\red{To prove this claim,
it suffices to show that 
the claim holds for each minimal cycle $C^*$ in $(\hyh/e_0)^*$,
 i.e., a cycle that does not contain any other cycle
in $(\hyh/e_0)^*$.
In the following, 
we assume that $C^*$ is a minimal cycle in $(\hyh/e_0)^*$.
It is trivial if $|e_i^*|=2$ for some $i\in \{1, \ldots, t\}$.
Thus we also assume that $|e_i^*|>2$ for all $i=1, \ldots, t$.}


By Lemma~\ref{LE2.5},
there exist edges $e_1, \ldots, e_t$ in $\hyh$ with the following properties:  for all $i=1, \ldots, t$,
\begin{enumerate}
\item[(a)] $|e_i\cap e_0|\leq 1$; 
\item[(b)] $e_i^*=e_i$ when $e_i\cap e_0=\emptyset$; and 
$e_i^*=(e_i-e_0)\cup \{w\}$ otherwise. 
\end{enumerate}

\red{Let $P_0=e_0\cap \bigcup_{1\leq i\leq t}e_i$.
Clearly, 
$\bigcup_{1\leq i\leq t}e_i^*=\bigcup_{1\leq i\leq t}e_i$
when $P_0=\emptyset$, 
and $\bigcup_{1\leq i\leq t}e_i^*=(\bigcup_{1\leq i\leq t}e_i-P_0)\cup \{w\}$ otherwise.}

\red{
We are now going to prove the claim on a case-by-case basis.
}

\red{
\noindent
\textbf{Case 1:} $w\notin \{u_1, \ldots, u_t\}$.}

\red{
In this case, 
either 
$w\notin \bigcup_{1\leq i\leq t}e_i^*$ 
or 
$w\in \bigcup_{1\leq i\leq t}e_i^*$ but $w\notin \{
u_1, \ldots, u_t\}$.
Thus there is a cycle 
$C=(u_1, e_1, u_2, e_2, \ldots, u_t, e_t, u_1)$ in $\hyh$.
Since $\hyh\in \hyl_0$,
there is an edge $e'=\{u, v\}\in \hye$ such that $e'\subseteq \bigcup_{1\leq i\leq t}e_i$.
By Proposition \ref{fact},
$|e'\cap e_0|\leq 1$.}

\red{
If $e'\cap e_0=\emptyset$,
by Lemma \ref{LE2.5},
then $e'\in (\hye/e_0)^*$.
Since $e'\subseteq \bigcup_{1\leq i\leq t}e_i$ and $e'\cap e_0=\emptyset$,
we have that $e'\cap P_0=\emptyset$,
implying that
$e'\subseteq \bigcup_{1\leq i\leq t}e_i^*$.}

\red{
If $|e'\cap e_0|=1$,
say $e'\cap e_0=\{u\}$,
then $\{v, w\}\in (\hye/e_0)^*$ by Lemma \ref{LE2.5}.
Since $e'\subseteq \bigcup_{1\leq i\leq t}e_i$ and $e'\cap e_0=\{u\}$,
we have that $P_0\neq \emptyset$ which implies
$\bigcup_{1\leq i\leq t}e_i^*=(\bigcup_{1\leq i\leq t}e_i-P_0)\cup \{w\}$.
Thus
$\{v, w\}\subseteq \bigcup_{1\leq i\leq t}e_i^*$.}

Hence the claim holds in this case.

\red{
\noindent
\textbf{Case 2:} $w\in \{u_1, \ldots, u_t\}$.}

\red{
Without loss of generality, suppose that $w=u_1$.
Since $w=u_1\in e_1^*\cap e_t^*$, 
by Lemma \ref{LE2.5}, 
we have $|e_1\cap e_0|=|e_t\cap e_0|=1$. By assumtion, $C^*$ is a minimal cycle in $({\hyh}/e_0)^*$, implying that $w\notin e_i^*$ for all $i=2,\ldots,t-1$,  and so $e_i\cap e_0=\emptyset$ for all $i=2,3,\ldots,t-1$ by Lemma \ref{LE2.5}. 
}

\begin{figure}[htp]
\centering
\labellist
\pinlabel {$e_1$} at 120 320
\pinlabel {$e_t$} at 230 385
\pinlabel {$x$} at 125 410
\pinlabel {$e_0$} at 90 465
\pinlabel {$e_2$} at 120 195
\pinlabel {$e_{t-1}$} at 345 320
\pinlabel {$u_2$} at 30 250
\pinlabel {$u_{t}$} at 350 400

\pinlabel {$e_1$} at 625 330
\pinlabel {$e_0$} at 715 415
\pinlabel {$e_t$} at 845 425
\pinlabel {$x$} at 610 400
\pinlabel {$y$} at 800 480
\pinlabel {$u_{t-1}$} at 940 400
\pinlabel {$u_2$} at 550 240
\pinlabel {$e_2$} at 650 190

\endlabellist

\includegraphics[scale=0.33]{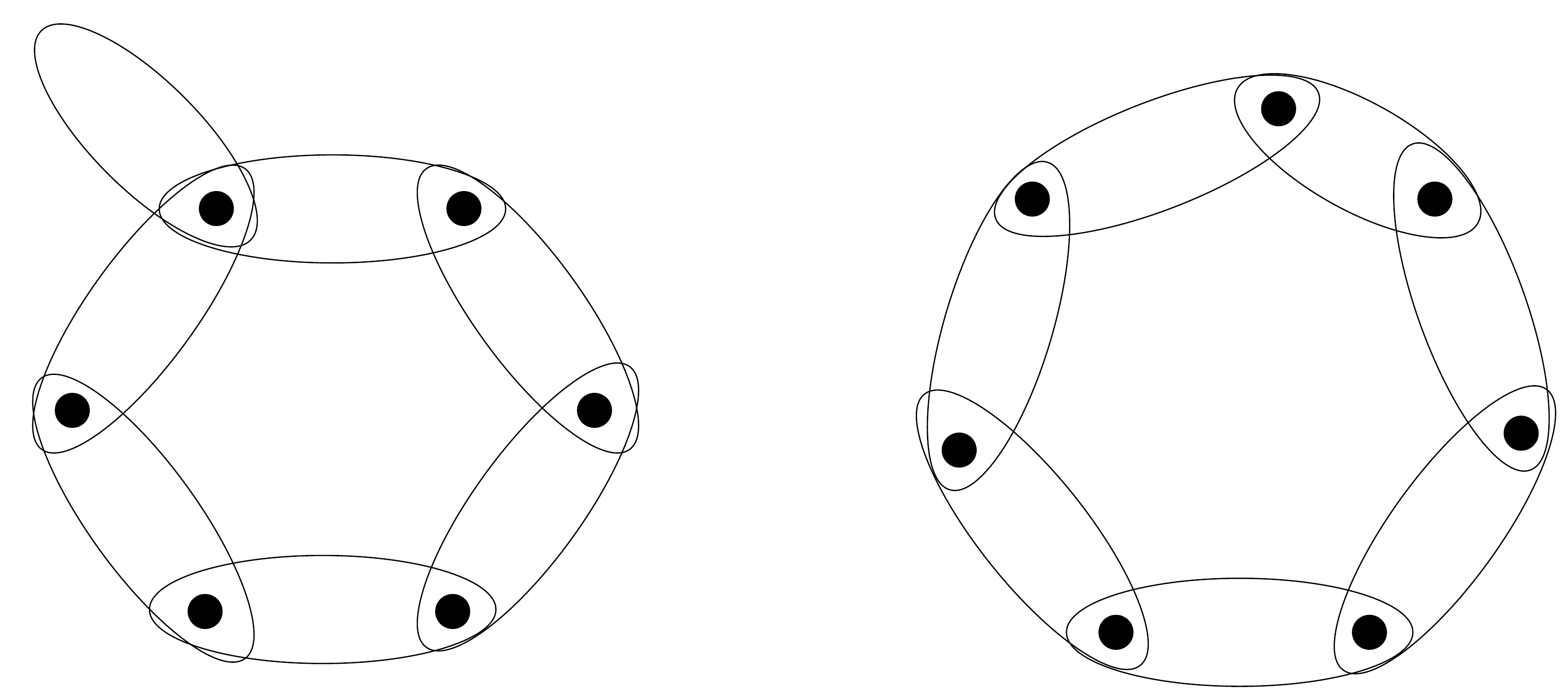}\\
\vspace*{0.3cm}
\scriptsize{\hspace*{0.2cm}(i)\hspace*{6.5cm} (ii)}
\caption{A cycle of $\hyh$ in Subcases 2.1 and 2.2}\label{np3}
\end{figure}

\red{
\textbf{Case 2.1:} $e_1\cap e_0=e_t\cap e_0=\{x\}$ for some $x\in V$.}

\red{In this subcase,
$e_1=(e_1^*\setminus \{w\})\cup \{x\}$,
$e_t=(e_t^*\setminus \{w\})\cup \{x\}$
and $e_i=e_i^*$ for all $i=2,3,\cdots,t-1$.
Thus 
$C=(x, e_1, u_2, e_2, u_3, \ldots, u_t, e_t, x)$ is a cycle in $\hyh$, as shown in Figure \ref{np3} (i).
Since $\hyh\in \hyl_0$,
there is an edge $e'=\{u, v\}\in \hye$ such that 
$e'\subseteq \bigcup_{1\leq i\leq t}e_i$.}

\red{If $x\notin \{u,v\}$, then $e'\in (\hye/e_0)^*$ 
and $e'\subseteq \bigcup_{1\leq i\leq t}e_i^*$.}

\red{If $x\in \{u,v\}$, say $x=u$, then $\{w,v\}\in (\hye/e_0)^*$ 
and $\{w,v\}\subseteq \bigcup_{1\leq i\leq t}e_i^*$.}

\red{Thus the claim holds for this subcase. 
}

\red{
\textbf{Case 2.2:} $e_1\cap e_0=\{x\}$ and $e_t\cap e_0=\{y\}$, 
where $x, y\in V$ and $x\neq y$.}

\red{
In this subcase,
$e_1=(e_1^*\setminus \{w\})\cup \{x\}$,
$e_t=(e_t^*\setminus \{w\})\cup \{y\}$
and $e_i=e_i^*$ for all $i=2,3,\cdots,t-1$.
Thus 
$C=\{y, e_0, x, e_1, u_2, e_2, u_3, e_3, \ldots, u_t, e_t, y\}$ is a cycle in $\hyh$, 
as shown in Figure \ref{np3} (ii).
Since $\hyh\in \hyl_0$,
there is an edge $e'=\{u, v\}\in \hye$ such that
\red{ 
$e'\subseteq \bigcup_{0\leq i\leq t}e_i$}. By Proposition \ref{fact},
$|e'\cap e_0|\leq 1$.}

\red{
If $e'\cap e_0=\emptyset$, then $e'\in (\hye/e_0)^*$ 
by Lemma \ref{LE2.5} and 
$e'\subseteq \big(\bigcup_{1\leq i\leq t}e_i\big)\setminus \{x, y\}$
implying that $e'\subseteq \bigcup_{1\leq i\leq t}e_i^*$.}

\red{
If $|e'\cap e_0|=1$,
say $e'\cap e_0=\{u\}$,
then $\{w, v\}\in (\hye/e_0)^*$ by Lemma \ref{LE2.5} and $v\in \big(\bigcup_{1\leq i\leq t}e_i\big)\setminus \{x,y\}$, implying that 
$\{w, v\}\in \bigcup_{1\leq i\leq t}e_i^*$.}

Thus the claim holds for this subcase. 
 
Therefore, the claim holds in Case 2 and the proof is complete.
\end{proof}

\section{Proving the main results}
\label{S4}

Now we are ready to prove Theorems \ref{Th-main1} and 
\ref{Th-main2}.

\begin{proof}[Proof of Theorem \ref{Th-main1}]
\red{Let $n$ be the number of vertices in $\hyh$ and $m$ be the number of edges in $\hyh$.
We shall prove the statement by induction on $m$.}

When $m=0$, $\hyh$ is an empty graph and $P(\hyh, \lambda)=\lambda^n$.
It is obvious that $(-1)^n P(\hyh, \lambda)>0$ for any $\lambda\in (-\infty, 0)$.

Assume that Theorem \ref{Th-main1} holds 
for all hypergraphs in $\hyl_0$
with  
less than $m$ $(m\geq 1)$ edges. 
Now let $\hyh=(V, \hye)$ be a hypergraph in $\hyl_0$,
where $|V|=n$ and $|\hye|=m$.
By (\ref{nEq1}), we can assume that $\hyh$ is Sperner.

\red{Let $e_0$ be the edge in $\hyh$ with the largest size.
If $|e_0|=2$, then $\hyh$ is a graph and Theorem \ref{Th-main1} has been 
proved that it holds for a graph
(see \cite{fengming2005chromatic, jackson1993zero, 
read1968introduction, read1988chromatic}).}

\red{If $|e_0|>2$,}
by Theorem \ref{TH2.3} and (\ref{nEq1}), we have that 
\begin{equation}\label{nEq6}
P(\hyh, \lambda)=P(\hyh-e_0, \lambda)-P(\hyh/e_0, \lambda)=P(\hyh-e_0, \lambda)-P((\hyh/e_0)^*, \lambda).
\end{equation}
As $\hyh$ is a Sperner hypergraph in $\hyl_0$,
by Lemma \ref{LE2.6},
both $\hyh-e_0$ and $(\hyh/e_0)^*$ belong to $\hyl_0$.
Thus, by the inductive assumption, for any real $\lambda\in (-\infty, 0)$,
\begin{equation}\label{Eqn7}
(-1)^n P(\hyh-e_0, \lambda)>0 \text{~~and~~} (-1)^{n-|e_0|+1}P((\hyh/e_0)^*, \lambda)>0.
\end{equation}
As $\hyh\in \hyl_0$, $|e_0|$ is even.
Thus,
from (\ref{nEq6}) and (\ref{Eqn7}), for any real $\lambda\in (-\infty, 0)$,
\begin{align*}
(-1)^n P(\hyh, \lambda)= & (-1)^n (P(\hyh-e_0, \lambda)-P((\hyh/e_0)^*, \lambda))\\
= & \underbrace{(-1)^n   P(\hyh-e_0, \lambda)}_{>0} + \underbrace{(-1)^{|e_0|}}_{>0} 
\underbrace{(-1)^{n-|e_0|+1}P((\hyh/e_0)^*, \lambda)}_{>0}\\
> & 0.
\end{align*}
The proof is complete.
\end{proof}


\red{For any hypergraph $\hyh$,
let 
\begin{equation}\label{nEq8}
Q(\hyh, \lambda)=\frac{1}{\lambda}\cdot P(\hyh, \lambda).
\end{equation}
Clearly, $Q(\hyh, \lambda)$ is also 
a polynomial in $\lambda$,
as $P(\hyh, \lambda)$ is a polynomial in $\lambda$ and $P(\hyh, 0)=0$ (see \cite{tomescu1998chromatic}).}
To prove Theorem \ref{Th-main2}, 
it suffices to establish the result below.

\begin{theo}\label{th4-1}
\red{If $\hyh=(V, \hye)$ is a hypergraph in 
$\hyl_0'$,} 
then for any real $\lambda\in [0, 1)$,
\begin{equation}\label{nEq90}
(-1)^{|V|+1}Q(\hyh, \lambda)>0.
\end{equation}
\end{theo}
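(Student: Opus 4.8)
The plan is to argue by induction on the number $m$ of edges of $\hyh$, mirroring the structure of the proof of Theorem~\ref{Th-main1} but keeping track of the spanning subhypergraph of $2$-edges. For the base case, when $\hye = \hye_2$ (that is, $\hyh$ is a graph), connectedness of $(V,\hye_2)$ means $\hyh$ is a connected graph, and the classical result that $(0,1)$ is a zero-free interval of chromatic polynomials of graphs gives $(-1)^{|V|+1}P(\hyh,\lambda)>0$ on $(0,1)$, while a connected graph on $|V|\ge 1$ vertices has $P(\hyh,\lambda)=\lambda\cdot Q(\hyh,\lambda)$ with $Q(\hyh,0)\neq 0$ of the correct sign (this is where the connectedness hypothesis in $\hyl_0'$ is essential, since a disconnected graph has a multiple zero at $0$). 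So $(-1)^{|V|+1}Q(\hyh,\lambda)>0$ on $[0,1)$.

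For the inductive step, assume $\hyh$ is Sperner (using \eqref{nEq1}), and let $e_0$ be an edge of largest size. If $|e_0|=2$ then $\hyh$ is a graph and we are in the base case. If $|e_0|>2$, apply Theorem~\ref{TH2.3} together with \eqref{nEq1} to write $P(\hyh,\lambda)=P(\hyh-e_0,\lambda)-P((\hyh/e_0)^*,\lambda)$, hence after dividing by $\lambda$,
\begin{equation*}
Q(\hyh,\lambda)=Q(\hyh-e_0,\lambda)-Q((\hyh/e_0)^*,\lambda).
\end{equation*}
The key point is that both $\hyh-e_0$ and $(\hyh/e_0)^*$ lie in $\hyl_0'$: they lie in $\hyl_0$ by Lemma~\ref{LE2.6}, and one must check they still contain a connected spanning subhypergraph of $2$-edges. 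For $\hyh-e_0$ this is immediate since $e_0$ has size $>2$, so deleting it does not touch $\hye_2$, and the vertex set is unchanged. For $(\hyh/e_0)^*$, contracting an edge of a connected spanning subgraph keeps it connected and spanning; every $2$-edge of $\hyh$ disjoint from $e_0$ survives unchanged in $(\hye/e_0)^*$ by Lemma~\ref{LE2.5}, and any $2$-edge meeting $e_0$ becomes, after contraction, either absorbed into $w$ or a $2$-edge at $w$ — in all cases the image of a connected spanning subgraph of $(V,\hye_2)$ under identifying $e_0$ to $w$ remains a connected spanning subgraph of $2$-edges of $(\hyh/e_0)^*$. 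Granting this, the inductive hypothesis gives, on $[0,1)$,
\begin{align*}
(-1)^{|V|+1}Q(\hyh-e_0,\lambda) &> 0, &
(-1)^{(|V|-|e_0|+1)+1}Q((\hyh/e_0)^*,\lambda) &> 0,
\end{align*}
and since $|e_0|$ is even, $(-1)^{|V|+1}$ and $(-1)^{|V|-|e_0|+2}$ agree in sign, so $(-1)^{|V|+1}\bigl(Q(\hyh-e_0,\lambda)-Q((\hyh/e_0)^*,\lambda)\bigr)$ is a sum of two positive quantities, giving \eqref{nEq90}.

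The main obstacle I expect is the verification that $(\hyh/e_0)^*\in\hyl_0'$, specifically that the $2$-edge spanning subhypergraph condition is preserved under contraction and then under passing to the Sperner subhypergraph. One must be careful that when a $2$-edge $\{a,b\}$ of $\hyh$ has both endpoints in $e_0$, Proposition~\ref{fact} (applied to the Sperner $\hyh$) forbids this when $|e_0|>2$ unless... actually Proposition~\ref{fact} says a $2$-edge meets any other edge in at most one vertex, so no $2$-edge can be contained in $e_0$; thus every $2$-edge of $\hyh$ either is disjoint from $e_0$ (survives as a $2$-edge) or meets $e_0$ in exactly one vertex (becomes a $2$-edge incident with $w$, and is not swallowed when forming the Sperner subhypergraph since it is already minimal among edges of size $\ge 2$). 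This shows the image $2$-edges still span $V/e_0$ and still form a connected subhypergraph, completing the argument; but writing this cleanly, together with checking the Sperner reduction does not delete any needed $2$-edge, is the delicate part.
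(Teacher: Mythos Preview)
Your proposal is correct and follows essentially the same approach as the paper's own proof: deletion--contraction on an edge $e_0$ of size greater than $2$, combined with Lemma~\ref{LE2.6} and Proposition~\ref{fact} to verify that both $\hyh-e_0$ and $(\hyh/e_0)^*$ remain in $\hyl_0'$, then using evenness of $|e_0|$ to make the signs add. The only cosmetic difference is that the paper inducts on $m'=|\{e\in\hye:|e|>2\}|$ rather than on the total number of edges $m$; this makes the base case cleanly ``$\hyh$ is a connected graph'' and avoids the slight awkwardness in your write-up where the ``base case'' ($\hye=\hye_2$) is not literally the minimum of the induction parameter. Your more detailed verification that the $2$-edges survive into $(\hyh/e_0)^*$ (via Proposition~\ref{fact} ruling out $2$-edges with both endpoints in $e_0$, and noting that $2$-edges are minimal so are never removed in the Sperner reduction) is correct and spells out what the paper compresses into the single line ``$\hyh^2/e_0$ is connected''.
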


\begin{proof}
Let $\hye'(\hyh)=\{e\in \hye: |e|>2\}$ and $m'=|\hye'(\hyh)|$.
We shall prove the statement by induction on $m'$.
When $m'=0$, $\hyh\in \hyl'_0$ implies that 
$\hyh$ is a connected graph.
Thus (\ref{nEq90}) holds in this case
(see \cite{fengming2005chromatic, jackson1993zero}).

Assume that (\ref{nEq90}) holds
for all hypergraphs $\hyh$ in $\hyl'_0$
with  $|\hye'(\hyh)|<m'$ ($m'\geq 1$).

Now let $\hyh=(V,\hye)$ be a Sperner hypergraph in $\hyl'_0$
with $|\hye'(\hyh)|=m'$.
Let $e_0\in \hye'(\hyh)$. 
By Theorem \ref{TH2.3} and (\ref{nEq1}), 
\begin{equation}\label{nEq9}  
P(\hyh, \lambda)=P(\hyh-e_0, \lambda)-P(\hyh/e_0, \lambda)
=P(\hyh-e_0, \lambda)-P((\hyh/e_0)^*, \lambda).
\end{equation}
By (\ref{nEq8}) and (\ref{nEq9}), 
\begin{equation}\label{nEq11}
Q(\hyh, \lambda)=Q(\hyh-e_0, \lambda)-Q((\hyh/e_0)^*, \lambda).
\end{equation}

As $|e_0|>2$ and $\hyh\in \hyl'_0$, 
 $\hyh-e_0\in \hyl'_0$ holds. 
 
 \red{By Lemma \ref{LE2.6},  
$(\hyh/e_0)^*\in \hyl_0$. 
 Let $\hyh^2$ be the hypergraph $(V, \hye_2)$.
As $\hyh\in \hyl'_0$,  
$\hyh^2$ is connected.
By Proposition \ref{fact}, }
$|e'\cap e_0|\leq 1$ for each $e'\in \hye_2$.
Thus $\hyh^2/e_0$ is connected, 
implying that $(\hyh/e_0)^*\in \hyl'_0$.

Since $|e_0|>2$, 
$|\hye'(\hyh-e_0)|<m'$ and $|\hye'((\hyh/e_0)^*)|<m'$.
Thus, by inductive assumption, for any real $\lambda\in [0, 1)$, 
\begin{equation}\label{nEq12}
(-1)^{|V|+1}Q(\hyh-e_0, \lambda)>0 \text{~~and~~} 
(-1)^{|V|-|e_0|+2} Q((\hyh/e_0)^*, \lambda)>0.
\end{equation}

As $\hyh\in \hyl_0$, $|e_0|$ is even.
Thus by (\ref{nEq11}) and (\ref{nEq12}), for any real $\lambda\in [0, 1)$, 
\begin{align*}
(-1)^{|V|+1} Q(\hyh, \lambda)= & (-1)^{|V|+1} (Q(\hyh-e_0, \lambda)-Q((\hyh/e_0)^*, \lambda))\\
= & \underbrace{(-1)^{|V|+1} Q(\hyh-e_0, \lambda)}_{>0}
+\underbrace{(-1)^{|e_0|}}_{>0}\underbrace{(-1)^{|V|-|e_0|+2}Q((\hyh/e_0)^*, \lambda)}_{>0}\\
> & 0.
\end{align*}
The proof is complete.
\end{proof}

\section{Further study}
\label{S5}

We are not sure if Theorems~\ref{Th-main1} and~\ref{Th-main2}
can be extended to larger families of 
hypergraphs. Especially for Theorem~\ref{Th-main2},
we are not sure if its conclusion holds 
for some connected hypergraph $\hyh=(V,\hye)$ whose 
spanning subhypergraph 
$(V,\hye_2)$ is not connected. 

\begin{prob}\label{p5-1}
Find a sufficient condition weaker than that in 
Theorem~\ref{Th-main2} for a hypergraph $\hyh=(V,\hye)$
such that 
$P(\hyh,\lambda)\ne 0$ for all $\lambda\in (0,1)$.
\end{prob}

We wonder if Theorems~\ref{Th-main1} and~\ref{Th-main2}
can be extended to the chromatic polynomials of 
mixed hypergraphs.
A \textit{mixed hypergraph} is a triple $\hyh=(V, \hyc, \hyd)$, 
where $V$ is a finite set, called 
the \textit{vertex set} of $\hyh$, 
and $\hyc$ and $\hyd$
are collections of subsets of $V$, 
called the \textit{$\hyc$-edge set} and \textit{$\hyd$-edge set} respectively.
A \textit{proper $k$-coloring} of  $\hyh=(V, \hyc, \hyd)$ is a mapping \red{$\phi:V\rightarrow \{1,2,\ldots,k\}$}
such that $|\{\phi(v): v\in e\}|<|e|$ 
holds for each $e\in \hyc$
and $|\{\phi(v): v\in e\}|\ge 2$ 
holds for each $e\in \hyd$.
The coloring of mixed hypergraphs 
was introduced by Voloshin \cite{voloshin1995upper} in 1995.
Since then,
 the study of colorings of 
mixed hypergraphs has developed into 
a separate topic and it has been applied in many fields, 
such as database management, channel assignments, cyber security \cite{hirotsugu2017access, jaffe2012price, kral2007mixed, li2016towards}.

The \textit{chromatic polynomial} of a mixed hypergraph 
$\hyh=(V, \hyc, \hyd)$ 
is the function, denoted by $P_{mix}(\hyh, \lambda)$,
which counts the number of proper $\lambda$-colorings of 
$\hyh$ whenever $\lambda$ is a positive integer.
Evidently, 
a weak proper $k$-coloring of a hypergraph $\hyh=(V, \hye)$ 
is actually a proper $k$-coloring of the mixed hypergraph $\hyh'=(V, \emptyset, \hye)$,
implying that $P(\hyh,\lambda)=P_{mix}(\hyh',\lambda)$.
Thus, the chromatic polynomials of 
mixed hypergraphs extend the chromatic polynomials
of hypergraphs.

\begin{prob}\label{p5-2}
Extend Theorems~\ref{Th-main1} and~\ref{Th-main2}
to mixed hypergraphs.
\end{prob}

\vspace{0.3 cm}

\noindent {\bf Acknowledgment}: 
The authors wish to thank the referees 
for their very helpful comments and suggestions.
The research was partially supported
by NTU AcRF project (RP 3/16 DFM) of Singapore.

\bibliography{Refer}{} 
\bibliographystyle{abbrv}

\end{document}